\begin{document}
\addtolength{\parskip}{8pt}
\parindent15pt

\newcommand\C{{\mathbb C}}

\newtheorem{thm}{Theorem}[section]
\newtheorem{prop}[thm]{Proposition}
\newtheorem{cor}[thm]{Corollary}
\newtheorem{lem}[thm]{Lemma}
\newtheorem{lemma}[thm]{Lemma}
\newtheorem{exams}[thm]{Examples}
\theoremstyle{definition}
\newtheorem{defn}[thm]{Definition}
\newtheorem{rem}[thm]{Remark}
\newcommand\RR{\mathbb{R}}
\newcommand{\la}{\lambda}
\def\RN {\mathbb{R}^n}
\newcommand{\norm}[1]{\left\Vert#1\right\Vert}
\newcommand{\abs}[1]{\left\vert#1\right\vert}
\newcommand{\set}[1]{\left\{#1\right\}}
\newcommand{\Real}{\mathbb{R}}
\newcommand{\supp}{\operatorname{supp}}
\newcommand{\card}{\operatorname{card}}
\renewcommand{\L}{\mathcal{L}}
\renewcommand{\P}{\mathcal{P}}
\newcommand{\T}{\mathcal{T}}
\newcommand{\A}{\mathbb{A}}
\newcommand{\K}{\mathcal{K}}
\renewcommand{\S}{\mathcal{S}}
\newcommand{\blue}[1]{\textcolor{blue}{#1}}
\newcommand{\red}[1]{\textcolor{red}{#1}}
\newcommand{\Id}{\operatorname{I}}
\newcommand\wrt{\,{\rm d}}
\def\SH{\sqrt {H}}

\newcommand{\rn}{\mathbb R^n}
\newcommand{\de}{\delta}
\newcommand{\tf}{\tfrac}
\newcommand{\ep}{\epsilon}
\newcommand{\vp}{\varphi}

\newcommand{\mar}[1]{{\marginpar{\sffamily{\scriptsize
        #1}}}}
\newcommand{\li}[1]{{\mar{LY:#1}}}
\newcommand{\el}[1]{{\mar{EM:#1}}}
\newcommand{\as}[1]{{\mar{AS:#1}}}
\newcommand\CC{\mathbb{C}}
\newcommand\NN{\mathbb{N}}
\newcommand\ZZ{\mathbb{Z}}
\renewcommand\Re{\operatorname{Re}}
\renewcommand\Im{\operatorname{Im}}
\newcommand{\mc}{\mathcal}
\newcommand\D{\mathcal{D}}
\newcommand{\al}{\alpha}
\newcommand{\nf}{\infty}
\newcommand{\comment}[1]{\vskip.3cm
	\fbox{%
		\color{red}
		\parbox{0.93\linewidth}{\footnotesize #1}}
	\vskip.3cm}

\newcommand{\disappear}[1]

\numberwithin{equation}{section}
\newcommand{\chg}[1]{{\color{red}{#1}}}
\newcommand{\note}[1]{{\color{green}{#1}}}
\newcommand{\later}[1]{{\color{blue}{#1}}}
\newcommand{\bchi}{ {\chi}}

\numberwithin{equation}{section}
\newcommand\relphantom[1]{\mathrel{\phantom{#1}}}
\newcommand\ve{\varepsilon}  \newcommand\tve{t_{\varepsilon}}
\newcommand\vf{\varphi}      \newcommand\yvf{y_{\varphi}}
\newcommand\bfE{\mathbf{E}}
\newcommand{\ale}{\text{a.e. }}

 \newcommand{\mm}{\mathbf m}
\newcommand{\Be}{\begin{equation}}
\newcommand{\Ee}{\end{equation}}

\title[Hardy's inequality for  Hermite   expansions  revisited  ]
{  Hardy's inequality for  Hermite   expansions revisited }

 \author{Peng Chen}
  \author{Jinsen Xiao}
 \address{Peng Chen, Department of Mathematics, Sun Yat-sen
 University, Guangzhou, 510275, P.R. China}
 \email{chenpeng3@mail.sysu.edu.cn}
 \address{Jinsen Xiao, School of Science, Guangdong University of Petrochemical Technology, Maoming 525000, P.R. China}
\email{jinsenxiao@gdupt.edu.cn}

\date{\today}
\subjclass[2000]{42C10 \and 42B30 \and 33C45}
\keywords{Hardy's inequality, Hermite operator, Hermite expansion,   Hardy space, atomic decomposition}

\begin{abstract} In this article,  we give a short proof of Hardy's inequality  for  Hermite expansions
of functions in the classical Hardy spaces  $H^p({\mathbb R^n})$,
 by using an atomic decomposition of the Hardy spaces associated with the Hermite operators.
When the space dimension is $1$, we obtain a new estimate of Hardy's inequality for  Hermite   expansions
in $H^p({\mathbb R})$ for the range $0<p<1.$
% \PACS{PACS code1 \and PACS code2 \and more}

\end{abstract}

\maketitle

\section{\bf Introduction}
  \setcounter{equation}{0}

A function  $f(z)$  analytic in the unit disk  $|z|<1$ for $z\in \mathbb{C}$ is  said
to  be  of  class  $H^p(\mathbb{C})$ where $0<p<\infty$  if
$$\lim_{r\to 1}\left\{\frac{1}{2\pi}\int_0^{2\pi}|f(re^{i\theta})|^p d\theta\right\}^{\frac{1}{p}}<\infty.$$
For $f(z)=\sum_{k=0}^{\infty} a_k z^k,$ it is interesting to find a condition on the Taylor coefficient
 $a_k$ which is  both necessary and sufficient for  $f$ to  be in  $H^p(\mathbb{C})$
 for some  $p\in (0,\infty).$  If  $1<p<\infty$,  the problem  is  equivalent to  that  of  describing the
Fourier coefficients  of  $L^p$  functions,  as  the  M.  Riesz theorem shows.
For the case $0<p\leq 1$, the known results can be described as  Hardy's inequality given by
 \begin{eqnarray}
\sum_{k=0}^{\infty}{|a_k|^p\over (k+1)^{2-p}}\leq C_p\|f\|_{H^p(\mathbb{C})}^p,\label{a}
 \end{eqnarray}
where the constant $C_p$ depends only on $p$ (see  \cite[Theorems 6.2]{duren}).

Analogues of Hardy's inequality in the context of eigenfunction expansions have
been considered, see \cite{CT, kanjin, kanjin1, li2014, radha} and the references therein.
%Analogues of \eqref{a}  have been established for the
%Fourier transforms \cite{garcia,stein1993,Taibleson,xiao} and  the expansion coefficients
 %\cite{kanjin,kanjin2,radha,plewa}. %\cite{kanjin,kanjin2,kanjin1,li2014,plewa,radha2000,radha}
%In this article we are concerned with Hardy's inequality for Hermite expansions.
Recall that  the Hermite operator $L$  on $ \RR^n$ is defined by
   \begin{eqnarray}\label{eecc}
L=-\Delta + |x|^2 =-\sum_{i=1}^n {\partial^2\over \partial x_i^2} + |x|^2, \quad x=(x_1, \cdots, x_n)\in\RR^n.
\end{eqnarray}
The operator $L$ is non-negative and self-adjoint with respect to the Lebesgue measure
on $\RN$. For  $k\in \mathbb N =\{0,1,2,\cdots\},$ the Hermite polynomials $H_k(t) $ on $\RR$ are
defined by $H_k(t)=(-1)^k e^{t^2} {d^k\over d t^k} \big(e^{-t^2}\big)$, and  the Hermite functions
$h_k(t):=(2^k k !  \sqrt{\pi})^{-1/2} H_k(t) e^{-t^2/2}$, $k=0, 1, 2, \cdots$ form an orthonormal basis
of $L^2(\mathbb R)$.
For
any multiindex $\mu\in {\mathbb N}^n$,
 the $n$-dimensional Hermite functions are given by tensor product of the one dimensional Hermite functions:
\begin{eqnarray}\label{ephi}
\Phi_{\mu}(x)=\prod_{i=1}^n h_{\mu_i}(x_i), \quad \mu=(\mu_1, \cdots, \mu_n).
\end{eqnarray}
Then the functions $\Phi_{\mu}$ are eigenfunctions for
the Hermite operator with eigenvalues $(2|\mu|+n)$ and $\{\Phi_{\mu}\}_{\mu\in \mathbb N^n}$ forms a complete orthonormal
system in $L^2({\RN})$ (see \cite{Th4}).
Thus, for  every $f\in L^2(\RN)$  we have  the Hermite expansion
\begin{eqnarray} \label{e1.3}
f(x)=\sum_{\mu}\langle f, \Phi_{\mu}\rangle \Phi_\mu(x)\ \ \ {\rm with} \ \  \ \|f\|^2_{L^2(\RN)}= \sum_{\mu}|\langle f, \Phi_{\mu}\rangle|^2.
\end{eqnarray}

Kanjin \cite{kanjin} obtained  the following Hardy's inequality in the context of one-dimensional
 Hermite functions, namely,
 \begin{eqnarray}
  \sum\limits_{k=0}^{\infty}  \frac{|\langle f, {h}_k\rangle |}{(k+1)^{\frac{29}{36}}}
  \leq C \|f\|_{H^1(\mathbb{R})}.\label{b}
 \end{eqnarray}
%Balasubramanian and Radha \cite{Balasubramanian} extended this result   to $H^p(\mathbb{R}), 0<p\leq1.$ The exponent in the denominator
 %is  $\big(\frac{3n}{4}+\epsilon\big)(2-p),$ where $\epsilon=(6+12[\frac{1}{p}])^{-1}.$
 Radha and Thangavelu \cite{radha}
proved inequalities of Hardy's type for $n$-dimensional Hermite expansions for $n\geq 2$ and $0<p\leq1,$
 \begin{eqnarray}
\sum\limits_{\mu\in \mathbb{N}^n} {|\langle f, \Phi_{\mu} \rangle |^p\over (2|\mu|+n)^{\frac{3n}{4}(2-p)}}\leq C\|f\|^p_{H^{p}(\mathbb{R}^n)}.
\label{cc}
 \end{eqnarray}
   However, their method  does not work for the one-dimensional case.   Kanjin in  \cite{kanjin1}
obtained an improved form of (\ref{b}) with $\frac{3}{4}+\epsilon$ for $\epsilon>0$ in place of $\frac{29}{36}$, and 
  conjectured that the possible  form  should be
 \begin{eqnarray}
  \sum\limits_{k=0}^{\infty}   \frac{|\langle f, {h}_k\rangle |}{(k+1)^{\frac{3}{4}}}\leq C \|f\|_{H^1(\mathbb{R})}. \label{c}
 \end{eqnarray}
  Li, Yu and Shi \cite{li2014} gave a positive answer to prove estimate \eqref{c}  by using a  different approach to  evaluate the square
  integration of the Poisson integral associated to Hermite
expansions of functions in $H^1(\mathbb{R}).$
%We remark that the results mentioned above are for the classical Hardy spaces and need to estimate the Taylor Remainders.

 The aim of this paper is to prove  Hardy's inequality  for  Hermite expansions
of functions
in a class Hardy spaces $H^p_L({\mathbb R^n})$
associated with the operator
$L=-\Delta+|x|^2$ for $n\geq1$ and  $0<p\leq 1.$
  Our  main result is stated as follows.
\begin{thm}\label{th1.1}
Let  $n\geq 1$ and  $0<p\leq 1.$ Then there exists a constant $C>0$ such that for $f\in H^{p}_{L}(\mathbb{R}^n),$
 \begin{eqnarray}
\sum\limits_{\mu\in \mathbb{N}^n} {|\langle f, \Phi_{\mu} \rangle |^p\over (2|\mu|+n)^{\frac{3n}{4}(2-p)}}\leq C\|f\|^p_{H_{L}^{p}(\mathbb{R}^n)}.
 \label{11c}
 \end{eqnarray}
As a consequence, we have that for any  $f\in H^{p}(\mathbb{R}^n)$,
 \begin{eqnarray}
\sum\limits_{\mu\in \mathbb{N}^n} {|\langle f, \Phi_{\mu} \rangle |^p\over (2|\mu|+n)^{\frac{3n}{4}(2-p)}}\leq C\|f\|^p_{H^{p}(\mathbb{R}^n)}.
 \label{11d}
 \end{eqnarray}
\end{thm}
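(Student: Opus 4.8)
The natural strategy is an atomic-decomposition argument: since $H^p_L(\RN)$ admits an atomic decomposition, it suffices to prove the inequality \eqref{11c} for a single $L$-atom $a$ with a uniform bound, and then sum up using the quasi-norm subadditivity of $\ell^p$ (valid since $0<p\le 1$). So the plan is: (1) recall that any $f\in H^p_L(\RN)$ can be written as $f=\sum_j \lambda_j a_j$ with $\sum_j|\lambda_j|^p \lesssim \|f\|^p_{H^p_L}$, where each $a_j$ is a $(p,2,M)$-atom associated to a ball $B_j=B(x_j,r_j)$ adapted to $L$; (2) establish the key per-atom estimate
\begin{equation*}
\sum_{\mu\in\NN^n} \frac{|\langle a,\Phi_\mu\rangle|^p}{(2|\mu|+n)^{\frac{3n}{4}(2-p)}} \le C
\end{equation*}
with $C$ independent of the atom; (3) combine via
$\sum_\mu \frac{|\langle f,\Phi_\mu\rangle|^p}{(2|\mu|+n)^{\frac{3n}{4}(2-p)}} \le \sum_j |\lambda_j|^p \sum_\mu \frac{|\langle a_j,\Phi_\mu\rangle|^p}{(2|\mu|+n)^{\frac{3n}{4}(2-p)}} \lesssim \sum_j|\lambda_j|^p \lesssim \|f\|^p_{H^p_L}$. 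The consequence \eqref{11d} then follows from the continuous inclusion $H^p(\RN)\hookrightarrow H^p_L(\RN)$ (the Hermite Hardy space is no smaller than the classical one, because $|x|^2\ge 0$ makes the heat kernel of $L$ dominated by that of $-\Delta$), so $\|f\|_{H^p_L}\lesssim\|f\|_{H^p}$.

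For step (2) I would split the sum over $\mu$ at the scale dictated by the atom's supporting ball. Write $N = r_j^{-2}$ (or $1$ if $r_j\ge 1$), the "local" frequency cutoff. For the low-frequency part $2|\mu|+n \lesssim N$, I would use the size and support conditions of the atom together with the pointwise bound $\|\Phi_\mu\|_\infty \lesssim (2|\mu|+n)^{n/4}$ (the $n$-fold tensor product of the one-dimensional bound $\|h_k\|_\infty \lesssim (k+1)^{-1/12}$ is too strong here; one uses instead the cruder $L^\infty$ bound, or rather estimates $\|\Phi_\mu\|_{L^2(B_j)}$ via $\|\Phi_\mu\|_\infty$ and $|B_j|$). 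Then $|\langle a,\Phi_\mu\rangle| \le \|a\|_2 \|\Phi_\mu\|_{L^2(B_j)} \le \|a\|_2 |B_j|^{1/2}\|\Phi_\mu\|_\infty \lesssim |B_j|^{1/p-1/2} |B_j|^{1/2} (2|\mu|+n)^{n/4}$, and summing $(2|\mu|+n)^{-\frac{3n}{4}(2-p)}(2|\mu|+n)^{np/4}$ over the roughly $N^n$ multiindices with $2|\mu|+n\le N$ gives a convergent bound precisely because the exponent $\frac{3n}{4}(2-p)$ is chosen so that $\frac{3n}{4}(2-p) - \frac{np}{4} - n > 0 \iff \frac{3n}{2} - \frac{3np}{4} - \frac{np}{4} - n > 0 \iff \frac{n}{2} - np > 0$, hmm — this needs care; more likely one uses the vanishing-moment / cancellation of the atom against $L^M$ to gain extra decay in $r_j$, turning the low-frequency sum into $\sum_{2|\mu|+n\lesssim N}(2|\mu|+n)^{M}r_j^{2M}\cdot(\ldots)$ and choosing $M$ large. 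For the high-frequency part $2|\mu|+n \gtrsim N$, one exploits the cancellation condition: $\langle a,\Phi_\mu\rangle = \langle L^{-M}b, \Phi_\mu\rangle = (2|\mu|+n)^{-M}\langle b,\Phi_\mu\rangle$ for some $b$ with controlled norm, gaining a factor $(2|\mu|+n)^{-M}$ that makes the tail sum converge for $M$ large enough depending on $n,p$.

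The main obstacle I expect is the sharp bookkeeping of exponents in step (2) — verifying that the particular power $\frac{3n}{4}(2-p)$ is exactly what makes both the low- and high-frequency sums converge, uniformly over all atoms (i.e. over all scales $r_j$ and centers $x_j$), and in particular handling atoms supported on balls far from the origin, where the relevant "localized" estimates for Hermite functions (which involve the effective support $|x|\lesssim \sqrt{2|\mu|+n}$ of $\Phi_\mu$) must be invoked rather than global $L^\infty$ bounds. A secondary technical point is making sure the $\ell^p$-subadditivity step is legitimate, i.e. that $\langle f,\Phi_\mu\rangle = \sum_j \lambda_j\langle a_j,\Phi_\mu\rangle$ holds (convergence in an appropriate sense so that the pairing with each fixed $\Phi_\mu$ passes through the sum); this follows from convergence of the atomic decomposition in $L^2$ on test functions, which is standard for these Hardy spaces. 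Once these are in place, the dimension-one novelty claimed in the abstract is automatic, since the argument never needed $n\ge 2$.
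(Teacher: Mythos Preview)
Your overall strategy matches the paper's exactly: reduce to a uniform per-atom bound via the $(p,L,M)$-atomic decomposition of $H^p_L$, split the Hermite sum dyadically at the scale $2|\mu|+n\sim r^{-2}$, and use the structure $a=L^Mb$ on one of the two pieces. Your low-frequency treatment, once you abandon the $L^\infty$ route and invoke $\langle a,\Phi_\mu\rangle=(2|\mu|+n)^M\langle b,\Phi_\mu\rangle$ together with $\|b\|_2\le r^{2M}V(B)^{1/2-1/p}$, is precisely what the paper does for its term~II.

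The high-frequency piece, however, contains a real error. You write $\langle a,\Phi_\mu\rangle=\langle L^{-M}b,\Phi_\mu\rangle=(2|\mu|+n)^{-M}\langle b,\Phi_\mu\rangle$ ``for some $b$ with controlled norm''. But the atom satisfies $a=L^Mb$, not $a=L^{-M}b$; the only candidate for your $b$ would be $L^Ma=L^{2M}b$, whose $L^2$ norm is \emph{not} controlled by the atom conditions (those bound only $\|L^kb\|_2$ for $0\le k\le M$). The fix is simpler than what you attempt: no cancellation is needed at high frequency. The paper applies H\"older and Plancherel directly to $a$,
\[
\sum_{2^j<2|\mu|+n\le 2^{j+1}}\frac{|\langle a,\Phi_\mu\rangle|^p}{(2|\mu|+n)^{\sigma}}
\le \|a\|_2^{p}\Big(\sum_{2^j<2|\mu|+n\le 2^{j+1}}(2|\mu|+n)^{-\frac{2\sigma}{2-p}}\Big)^{\frac{2-p}{2}}
\lesssim r^{\frac{n(p-2)}{2}}\,2^{-j\frac{n(2-p)}{4}},
\]
and the sum over $j$ with $2^jr^2\ge1$ is a convergent geometric series giving a uniform constant. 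In short, the cancellation $a=L^Mb$ is used on the \emph{low}-frequency block only; the high-frequency block needs just the size bound $\|a\|_2\le V(B)^{1/2-1/p}$.

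Two further remarks. First, because the paper's argument uses only $\|a\|_2$, $\|b\|_2$ and Plancherel, it never touches a pointwise or localized estimate for $\Phi_\mu$; your anticipated ``main obstacle'' of atoms centered far from the origin simply does not arise, and this is exactly why the proof works uniformly for all $n\ge1$. Second, for the interchange $\langle f,\Phi_\mu\rangle=\sum_j\lambda_j\langle a_j,\Phi_\mu\rangle$, the paper does a bit more than appeal to $L^2$ convergence: it checks that each $\Phi_\mu$ lies in the dual space $\mathrm{Lip}_{p,L}^M(\RR^n)=(H^p_L)^*$ and uses this to control the tail of the atomic sum.
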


We would like to  mention that  in the proof of Theorem \ref{th1.1},
we do not need to estimate
 the Taylor expansion of the function $\sum_{|\mu|=k} \Phi_{\mu}(x)  \Phi_{\mu} (y)$ as in \cite{CT, kanjin, kanjin1, li2014, radha}.
 Instead,  our proof heavily depends on the   atomic decomposition   of the
Hardy spaces   $H^p_L(\RN)$   in \cite{Hofmann2011}, see Section 3 below.
Besides, when the space dimension is $1$,  we obtain a new estimate of Hardy's inequality for  Hermite   expansions
in $H^p({\mathbb R})$ for the range $0<p<1.$

The paper is organized as follows. In Section 2, we recall the definition of the
Hardy spaces   $H^p_L(\RN)$ associated with $L=-\Delta+|x|^2$ and its    atomic decomposition,
and show that the classical Hardy space $H^p(\RN), 0<p\leq 1,$ is a proper subspace of
the space  $H^p_L(\RN)$ associated with $L$.
 Our main result, Theorem \ref{th1.1}, will be proved in Section 3.

Throughout, the letters $C$ and $c$ will denote (possibly different) constants that are
independent of the essential variables.

\section{\bf The space  $H^p(\RN)$ is a proper subspace of   $H_L^p(\RN)$ for $0<p\leq 1$
 }
 \setcounter{equation}{0}

 Recall that  Hardy space $H^p({\mathbb{R}}^n)$ can be defined   in terms of the maximal function associated with
   the heat semigroup generated by the Laplace operator $\Delta$ on   $\mathbb R^n$.
 Following \cite{St2}, a distribution
 $f$   is said to be in  $H^p({\mathbb{R}}^n), 0<p\leq 1$  if
\begin{eqnarray}\label{e1.1a}
{\mathcal M}_{\Delta} f(x)=\sup_{ t>0}\abs{e^{{t}\Delta}f(x)}
\end{eqnarray}
belongs to $  L^p({\mathbb{R}}^n)$. If this is the case, then we set
$
\|f\|_{H^p({\mathbb{R}}^n)}=\|{\mathcal M}_{\Delta} f\|_{L^p({\mathbb{R}}^n)}.
$

The  Hardy space associated with the Hermite operator $L=-\Delta + |x|^2$
has attracted much attentions in the last decades and has been
 a very active research topic in harmonic analysis, see, for example, \cite{Dziubanski, Hofmann2011}.
Following \cite{Dziubanski}, we say that $f$ is in the space $H^p_{L}({\mathbb R}^n)$ if
$$
{\mathcal M}_Lf(x):=\sup_{t>0} \big|e^{-tL} f(x)\big|
$$
is in $L^p({\mathbb R}^n).$ The quasi-norm in $H^p_{L}$ is defined by
$$
\| f \|_{H^p_{L}({\mathbb R}^n)}^p=\|{\mathcal M}_Lf\|_{L^p}^p.
$$
When $p>1,$ $H^p_{L}({\mathbb R}^n)\simeq L^p(\mathbb{R}^n).$

Now, define  an auxilliary function $m(x)$ by
$$
m(x)=\sum_{\beta\leq (2,  \cdots, 2) } \big|D^{\beta} |x|^2  \big|^{-(|\beta|+2)},
$$
where $|\beta|=|(\beta_1, \cdots, \beta_n)|=\sum_{i=1}^n \beta_i.$ There exists a constant $c>0$ such that
$m(x)>c$ for every $x\in \RR^n.$ Set
$$
{\mathcal B}_0=\{ x|x\in \RR^n, c\leq m(x)\leq 1\};
$$
$$
{\mathcal B}_k=\{ x|x\in \RR^n, 2^{k-1\over 2}\leq m(x)\leq 2^{k\over 2}\}, k=1,2,3, \cdots.
$$
We say that a function $a$ is a $p$-atom for the space $H^p_{L}({\mathbb R}^n)$ associated to a ball $B(x_0, r)=\{x\in\mathbb{R}^n: |x-x_0|<r\}$ if
\begin{itemize}
\item[(i)]  $\supp a\subseteq B(x_0, r)$;\\
\item[(ii)] $\|a\|_{L^{\infty}}\leq |B(x_0, r)|^{-1/p}$;\\
\item[(iii)] If $x_0\in {\mathcal B}_k, $ then $r\leq 2^{1-{k\over 2}};$\\
\item[(iv)] If $x_0\in {\mathcal B}_k  $  and  $r\leq 2^{-1-{k\over 2}},$ then $\int x^{\beta} a(x)dx=0 $ for all $|\beta|\leq n({1\over p}-1).$
\end{itemize}

In \cite[Theorem 1.12]{Dziubanski}, Dziuba\'{n}ski  obtained the following atomic characterization of  $H^p_{L}({\mathbb R}^n).$
\begin{prop}\label{prop2.1}
 A distribution $f$ is in $H^p_{L}({\mathbb R}^n), 0<p\leq 1$ if and only if there exist  $\lambda_j\in {\mathbb R}$ and
$p$-atom $a_j, j=0,1, 2, \cdots,$ such that
$$
f(x)=\sum_{j=0}^{\infty} \lambda_j a_j(x)
$$
and
$$
C_1\|f\|_{H^p_L}^p \leq \sum_{j=0}^{\infty} |\lambda_j|^p \leq  C_2\|f\|_{H^p_L}^p,
$$
where   constants $C_1, C_2$ depend  only on   $ p$.
\end{prop}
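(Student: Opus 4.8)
The plan is to establish the two implications separately: the \emph{synthesis} direction, that an atomic sum lies in $H^p_L(\mathbb{R}^n)$ with $\|f\|_{H^p_L}^p \lesssim \sum_j |\lambda_j|^p$, and the \emph{analysis} direction, that every $f \in H^p_L(\mathbb{R}^n)$ admits such a decomposition with $\sum_j |\lambda_j|^p \lesssim \|f\|_{H^p_L}^p$. The synthesis direction is the routine one. Since $0 < p \le 1$, the maximal operator $\mathcal{M}_L$ is sublinear and $\|\cdot\|_{L^p}^p$ is subadditive, so $g \mapsto \|g\|_{H^p_L}^p = \|\mathcal{M}_L g\|_{L^p}^p$ is subadditive; it therefore suffices to prove a uniform bound $\|\mathcal{M}_L a\|_{L^p}^p \le C$ for every $p$-atom $a$, with $C$ independent of the atom. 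Summing then gives $\|\sum_j \lambda_j a_j\|_{H^p_L}^p \le \sum_j |\lambda_j|^p \|a_j\|_{H^p_L}^p \le C \sum_j |\lambda_j|^p$, and the matching lower bound $C_1 \|f\|_{H^p_L}^p \le \sum_j |\lambda_j|^p$ will emerge from the analysis direction.

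To prove the uniform atom bound I would work with the heat (Mehler) kernel $p_t(x,y)$ of $e^{-tL}$, which obeys Gaussian bounds $|p_t(x,y)| \le C t^{-n/2} e^{-|x-y|^2/ct}$ together with the additional decay produced by the potential $|x|^2$ (equivalently, by the spectral gap and the critical scale encoded in $m$). Fix a $p$-atom $a$ on $B = B(x_0,r)$ with $x_0 \in \mathcal{B}_k$. On a fixed dilate $2B$ I would dominate $\mathcal{M}_L a$ by a multiple of $\|a\|_{L^\infty}$ and of the Hardy--Littlewood maximal function, so that $\int_{2B} (\mathcal{M}_L a)^p \lesssim |B|\,\|a\|_{L^\infty}^p \le 1$ by condition (ii). Off $2B$ the argument splits according to the two atom types. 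For atoms with cancellation, condition (iv), I would insert the vanishing moments and Taylor-expand $p_t(\cdot,y)$ about $y = x_0$ to gain a factor $(r/|x-x_0|)^{n + \lfloor n(1/p-1)\rfloor + 1}$, exactly as in classical $H^p$ theory, which makes $\mathcal{M}_L a$ integrable to the $p$-th power off $2B$. For atoms without cancellation, condition (iv) is vacuous but condition (iii) forces $r$ to be comparable to the local scale $2^{-k/2}$; here I would instead exploit the potential-induced decay of $p_t$ at this scale to obtain the same tail bound. Combining the two contributions yields $\|\mathcal{M}_L a\|_{L^p}^p \le C$.

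For the analysis direction I would first replace $\mathcal{M}_L$ by an equivalent grand maximal function taken over a family of test functions adapted to $L$; a Fefferman--Stein-type equivalence $\|f\|_{H^p_L} \approx \|\mathcal{M}_L^{\mathrm{grand}} f\|_{L^p}$ supplies the flexibility needed to produce smooth, well-localized pieces. Setting $\Omega_j = \{x : \mathcal{M}_L^{\mathrm{grand}} f(x) > 2^j\}$, I would perform a Whitney--Calder\'on--Zygmund decomposition of each open set $\Omega_j$, use an associated partition of unity to write $f = \sum_i b_i^j$ at each height, and then telescope across $j$ to build atoms $a_{j,i}$ with coefficients $\lambda_{j,i} \approx 2^j |Q_{j,i}|^{1/p}$, the $Q_{j,i}$ being the Whitney cubes. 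Crucially, the cubes must respect the local scale $m$: those near the critical radius are kept at radius $\sim 2^{-k/2}$ so as to satisfy (iii), while only the genuinely small cubes (with $r \le 2^{-1-k/2}$) are given the vanishing moments demanded by (iv), the cancellation for the remaining pieces being furnished automatically by the potential. A layer-cake computation would then give $\sum_{j,i} |\lambda_{j,i}|^p \approx \sum_j 2^{jp} |\Omega_j| \approx \int (\mathcal{M}_L^{\mathrm{grand}} f)^p \approx \|f\|_{H^p_L}^p$.

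The main obstacle is the analysis direction, and within it the need to reconcile the purely local Calder\'on--Zygmund machinery with the global structure dictated by the auxiliary function $m(x)$. The stopping-time and Whitney selection must simultaneously produce cubes small enough to carry cancellation where $f$ genuinely oscillates and recognize the scales at which the potential already provides cancellation, so that the resulting pieces meet conditions (iii)--(iv) rather than the full classical moment conditions. Verifying that the telescoped pieces are, after normalization, honest $p$-atoms in the sense (i)--(iv) --- in particular controlling their $L^\infty$ norms and support radii uniformly --- is the delicate point, and it is precisely the heat kernel bounds with potential decay that make it possible; for this reason I would invest most of the effort in the kernel estimates at the outset.
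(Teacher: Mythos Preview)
The paper does not prove this proposition at all: it is quoted verbatim as \cite[Theorem 1.12]{Dziubanski} and used as a black box. So there is nothing to compare your argument to; the ``paper's proof'' is a citation.

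Your outline is a reasonable sketch of how Dziuba\'nski's original argument proceeds, and the overall architecture (uniform $H^p_L$ bound for a single atom via Gaussian kernel estimates plus cancellation/critical-scale dichotomy; Calder\'on--Zygmund decomposition at dyadic heights of a grand maximal function, with Whitney cubes calibrated to the local scale $m$) is correct. Two comments are worth making. First, the synthesis direction as you wrote it is essentially complete once the Mehler kernel bounds are in hand. Second, the analysis direction is only a plan: you have correctly identified the delicate point (producing pieces that satisfy (iii)--(iv) simultaneously, i.e.\ deciding at each Whitney cube whether to impose moments or to stop at the critical radius), but actually carrying this out requires a nontrivial local comparison between $e^{-tL}$ and $e^{t\Delta}$ at scales below $2^{-k/2}$ --- this is where Dziuba\'nski does the real work, and your sketch does not yet contain that ingredient. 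For the purposes of the present paper, however, none of this is needed: Proposition~2.1 is invoked only to deduce $H^p(\mathbb{R}^n)\subset H^p_L(\mathbb{R}^n)$, and the main theorem relies instead on the $(p,L,M)$-atomic decomposition of Proposition~3.1.
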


It follows from Proposition~\ref{prop2.1} and the properties of ${\mathcal B}_k$ that
  $H^p({\mathbb R}^n)$ is a proper subspace of the space $H^p_{L}({\mathbb R}^n)$ (see \cite[p.77]{Dziubanski}).
   We can decompose every element in $H^p_{L}({\mathbb R}^n) $ into atoms that are supported on small balls, but some atoms may not have the moment
   condition. That is,

   \begin{prop}\label{prop2.2}
Let $n\geq 1$ and $0<p\leq 1.$ Let $L=-\Delta +|x|^2$. Then we have that
$$
H^p({\mathbb R}^n)\subsetneqq  H^p_{L}({\mathbb R}^n).
$$
\end{prop}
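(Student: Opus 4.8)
The plan is to prove two separate statements: the continuous inclusion $H^p(\mathbb{R}^n)\subseteq H^p_L(\mathbb{R}^n)$ with $\|f\|_{H^p_L(\mathbb{R}^n)}\le C\|f\|_{H^p(\mathbb{R}^n)}$, and the fact that it is strict. Both are most cleanly handled through atomic decompositions: on one side the classical atomic decomposition of $H^p(\mathbb{R}^n)$ into atoms on balls carrying the full set of vanishing moments (see \cite{St2}), and on the other side the atomic characterization of $H^p_L(\mathbb{R}^n)$ given by Proposition~\ref{prop2.1}.

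For the inclusion I would follow \cite[p.~77]{Dziubanski}. Start from $f=\sum_j\lambda_j a_j$ with $\sum_j|\lambda_j|^p\le C\|f\|^p_{H^p(\mathbb{R}^n)}$, each $a_j$ a classical $H^p$-atom: $\supp a_j\subseteq B_j:=B(x_0^j,r_j)$, $\|a_j\|_{L^\infty}\le|B_j|^{-1/p}$, and $\int x^\beta a_j(x)\wrt x=0$ for $|\beta|\le n(1/p-1)$. The key observation is that such an $a_j$ already satisfies conditions (i), (ii) and (iv) for $H^p_L$-atoms (the last one because it has the full set of vanishing moments, so (iv) holds whether or not it is active); only the scale restriction (iii) may fail. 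Let $k$ be the index with $x_0^j\in\mathcal B_k$. If $r_j\le 2^{1-k/2}$, then $a_j$ is already an $H^p_L$-atom. If $r_j>2^{1-k/2}$, cover $B_j$ by finitely many balls $B(y_i,\rho(y_i))$, $\rho(y_i):=1/m(y_i)$, drawn from a fixed covering of $\mathbb{R}^n$ of bounded overlap adapted to the auxiliary function $m$ (the covering lemma underlying Proposition~\ref{prop2.1}), and write $a_j=\sum_i a_j\phi_i$ for a subordinate partition of unity $\{\phi_i\}$. Each $a_j\phi_i$ is supported in a ball $B_{j,i}$ of radius comparable to the local critical radius $\rho(y_i)$; if $y_i\in\mathcal B_{k'}$ then $2^{-1-k'/2}<\rho(y_i)\le 2^{1-k'/2}$, so $B_{j,i}$ satisfies (iii) and condition (iv) is vacuous for it. Writing $a_j\phi_i=c_{j,i}\tilde a_{j,i}$ with $\tilde a_{j,i}$ an $H^p_L$-atom, the bound $\|a_j\|_{L^\infty}\le|B_j|^{-1/p}$ gives $c_{j,i}^p\le|B_{j,i}|/|B_j|$, and bounded overlap gives $\sum_i|B_{j,i}|\le C|B_j|$, hence $\sum_i c_{j,i}^p\le C$. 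Combining the two cases, $f=\sum_{j,i}\lambda_j c_{j,i}\tilde a_{j,i}$ with $\sum_{j,i}|\lambda_j c_{j,i}|^p\le C\sum_j|\lambda_j|^p\le C\|f\|^p_{H^p(\mathbb{R}^n)}$, and Proposition~\ref{prop2.1} then gives $f\in H^p_L(\mathbb{R}^n)$ together with the norm estimate.

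For strictness I would exhibit one explicit $H^p_L$-atom that is not in $H^p(\mathbb{R}^n)$. Fix $k_0$ with $\mathcal B_{k_0}\ne\emptyset$, pick $x_0\in\mathcal B_{k_0}$, and set $r_0=2^{-k_0/2}$, so that $2^{-1-k_0/2}<r_0\le 2^{1-k_0/2}$. Let $a=|B(x_0,r_0)|^{-1/p}\chi_{B(x_0,r_0)}$. Then (i) and (ii) are clear, (iii) holds since $r_0\le 2^{1-k_0/2}$, and (iv) is vacuous since $r_0>2^{-1-k_0/2}$; thus $a$ is an $H^p_L$-atom, and $\|a\|_{H^p_L(\mathbb{R}^n)}\le C$ by Proposition~\ref{prop2.1}. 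On the other hand $\int_{\mathbb{R}^n}a(x)\wrt x=|B(x_0,r_0)|^{1-1/p}\ne 0$, whereas any bounded compactly supported function $g$ in $H^p(\mathbb{R}^n)$ with $0<p\le1$ must satisfy $\int_{\mathbb{R}^n}g=0$: indeed $e^{t\Delta}g(x)=(4\pi t)^{-n/2}\int e^{-|x-y|^2/(4t)}g(y)\wrt y\sim(4\pi t)^{-n/2}e^{-|x|^2/(4t)}\int_{\mathbb{R}^n}g$ as $|x|\to\infty$, so taking $t\approx|x|^2$ gives ${\mathcal M}_{\Delta} g(x)\gtrsim|x|^{-n}\big|\int_{\mathbb{R}^n}g\big|$ for large $|x|$, which is not in $L^p(\mathbb{R}^n)$ when $p\le1$. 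Hence $a\notin H^p(\mathbb{R}^n)$, and $H^p(\mathbb{R}^n)\subsetneqq H^p_L(\mathbb{R}^n)$.

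The hard part will be the chopping step in the inclusion: one must invoke the covering lemma for the auxiliary function $m$ to split a classical atom on a large ball into pieces supported on balls of size comparable to the local critical radius, and then verify, using the bounded overlap of that covering, that replacing an atom by its pieces costs only a bounded factor in the $\ell^p$-quasinorm of the coefficients, uniformly over atoms — even when $m$ oscillates strongly across $B_j$ and the number of pieces is large. Everything else (the two easy cases of the decomposition, the computation $\int a\ne0$, and the elementary obstruction to membership in $H^p(\mathbb{R}^n)$) is routine.
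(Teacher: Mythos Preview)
Your proposal is correct and follows precisely the route the paper indicates: the paper does not give its own proof of this proposition but simply refers to \cite[p.~77]{Dziubanski} together with Proposition~\ref{prop2.1}, and the one-sentence explanation it offers (``some atoms may not have the moment condition'') is exactly your strictness example. Your inclusion argument via chopping classical atoms against a covering adapted to the critical radius $\rho\sim 1/m$ is the standard argument from that reference, and your identification of the ``hard part'' is accurate.
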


\section{\bf Proof of Theorem \ref{th1.1}}
 \setcounter{equation}{0}

 To show Theorem \ref{th1.1}, we need  an $L$-atomic decomposition of $H^p_{L}({\mathbb R}^n)$  (see \cite{Hofmann2011, SY}).
 Let $n\geq 1$ and $0<p\leq 1$. Assume that $M$ is an integer satisfying $$M>\frac{n(2-p)}{4p}.$$
Let $\mathcal {D}(T)$ be the domain of an operator $T$ and $B=B(x_B,r_B)$ with the measure $V(B)=cr_B^{n}.$

Given   $0<p\leq 1,$ a function  $a\in L^2(\mathbb{R}^n)$ is called a $(p, L, M)-$atom associated with the operator $L$
if there exists a function $b\in \mathcal {D}(L^M)$ and a ball $B\subset\mathbb{R}^n$ such that
\begin{itemize}
\item[(1)]  $a=L^M b;$\\
\item[(2)]  $\text{supp}\, L^k b\subset B,\ k=0,1,\cdots,M;$\\
\item[(3)]$\|(r^2_{B}L)^k b\|_{L^2 (\mathbb{R}^n)}\leq r^{2M}_{B} V(B)^{1/2-1/p},\ k=0,1,\cdots,M.$
\end{itemize}

Following \cite{SY}, for a function $f\in L^2(\mathbb{R}^n),$ we will say that $f=\sum_{j=0}^{\infty} \lambda_j a_j$
is an atomic $(p, L, M)-$representation if $\{\lambda_j\}_{j=0}^{\infty}\in\ell^p,$ each $a_j$ is a $(p, L, M)-$atom,
and the sum converges in $L^2(\mathbb{R}^n).$ Set
$$
\mathbb{H}^p_{L,at,M}(\mathbb{R}^n):=\{f\in L^2(\mathbb{R}^n): f \ \text{has an atomic }(p, L, M)\text{-representation}\},
$$
with the norm $\|f\|_{\mathbb{H}_{L,at,M}(\mathbb{R}^n)}^p$ given by
 \begin{eqnarray*}
\inf\bigg\{ \bigg(\sum\limits_{j=0}^{\infty} |\lambda_j|^p\bigg)^{\frac{1}{p}}: f=\sum\limits_{j=0}^{\infty}
 \lambda_j a_j\hbox{ is an atomic }(p, L, M)\hbox{-representation}\bigg\}.
 \end{eqnarray*}
 The atomic Hardy space $H^p_{L,at,M}(\mathbb{R}^n)$ is then defined as the completion of
  $\mathbb{H}^p_{L,at,M}(\mathbb{R}^n)$ with respect to this norm. Then we have the following result.

\begin{prop}\label{th3}
For $0<p\leq 1,$  we have
$$H^p_{L}(\mathbb{R}^n)\simeq H^p_{L,at,M}(\mathbb{R}^n).$$
Moreover, for $f\in H^p_{L}(\mathbb{R}^n)$, there exist $(p, L, M)$-atoms $\{a_j\}_{j=0}^{\infty}$ and $\{\lambda_j\}_{j=0}^{\infty}\in\ell^p,$ such that $f=\sum_{j=0}^{\infty}\lambda_j a_j $ is in $H^p_{L}$ and
$$
\sum_{j=0}^{\infty}|\lambda_j|^p\leq C\|f\|_{H^p_{L}}^p.
$$
\end{prop}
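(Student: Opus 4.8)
The plan is to prove the two continuous embeddings $H^p_{L,at,M}(\mathbb{R}^n)\hookrightarrow H^p_L(\mathbb{R}^n)$ and $H^p_L(\mathbb{R}^n)\hookrightarrow H^p_{L,at,M}(\mathbb{R}^n)$, establishing the latter first on the dense subclass $L^2(\mathbb{R}^n)\cap H^p_L(\mathbb{R}^n)$ and then passing to completions. The only structural features of $L=-\Delta+|x|^2$ that enter are: $L$ is nonnegative and self-adjoint on $L^2(\mathbb{R}^n)$; by the Feynman--Kac formula its heat kernel satisfies the Gaussian bound $0\le p_t(x,y)\le(4\pi t)^{-n/2}e^{-|x-y|^2/4t}$, so $e^{-tL}$ obeys Davies--Gaffney estimates and $\mathcal M_L$ is dominated by the Hardy--Littlewood maximal operator; and, since the potential $|x|^2$ is nonnegative, the wave group $\cos(t\sqrt L)$ has unit propagation speed, i.e.\ $\supp K_{\cos(t\sqrt L)}\subset\{(x,y):|x-y|\le|t|\}$. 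The first embedding I would get by a uniform bound $\|a\|_{H^p_L}\le C$ for every $(p,L,M)$-atom $a=L^Mb$ with ball $B=B(x_B,r_B)$, splitting $\mathcal M_La$ over $2B$ and $(2B)^c$. On $2B$, the $L^2$-boundedness of $\mathcal M_L$ together with condition~(3) for $k=0$ (which forces $\|a\|_{L^2}\le V(B)^{1/2-1/p}$) and Hölder's inequality with exponent $2/p$ give $\int_{2B}(\mathcal M_La)^p\le C$. On $(2B)^c$, writing $e^{-tL}a=t^{-M}(tL)^Me^{-tL}b$ and using the Gaussian bound for the kernel of $(tL)^Me^{-tL}$ together with $\|b\|_{L^1}\le V(B)^{1/2}\|b\|_{L^2}\le r_B^{2M}V(B)^{1-1/p}$, one finds $\mathcal M_La(x)\le CV(B)^{1-1/p}r_B^{2M}|x-x_B|^{-n-2M}$, so $\int_{(2B)^c}(\mathcal M_La)^p\le C$ once $p(n+2M)>n$, which is implied by $M>n(2-p)/(4p)$. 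Since $p\le1$ makes $\|\cdot\|_{H^p_L}^p$ subadditive, this yields $\|f\|_{H^p_L}^p\le C\sum_j|\lambda_j|^p$ for any atomic $(p,L,M)$-representation $f=\sum_j\lambda_ja_j$.

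For the reverse embedding I would first invoke the area-function characterization $\|f\|_{H^p_L}\sim\|S_Lf\|_{L^p}$ with $S_Lf(x)^2=\int_0^\infty\!\int_{|x-y|<t}|t^2Le^{-t^2L}f(y)|^2\,\tfrac{dy\,dt}{t^{n+1}}$, valid here because $e^{-tL}$ has Gaussian bounds (this is part of the machinery of \cite{Hofmann2011}, see also \cite{SY}). Next fix an even Schwartz function $\Psi$ with $\widehat\Psi$ supported in $[-1,1]$, normalized so that, with $\psi(\lambda):=\lambda^{M+1}\Psi(\sqrt\lambda)$, the Calderón reproducing formula $f=c\int_0^\infty\psi(t^2L)\big(t^2Le^{-t^2L}f\big)\,\tfrac{dt}{t}$ holds in $L^2(\mathbb{R}^n)$. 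Setting $F(t,\cdot):=t^2Le^{-t^2L}f$, the quadratic estimate gives $\|F\|_{T^p_2}\sim\|S_Lf\|_{L^p}\sim\|f\|_{H^p_L}$, where $T^p_2$ is the Coifman--Meyer--Stein tent space, and the tent-space atomic decomposition writes $F=\sum_j\lambda_jA_j$ with $\sum_j|\lambda_j|^p\le C\|F\|_{T^p_2}^p$, each $A_j$ a $T^2_2$-atom supported in the tent over a ball $B_j$ (so $A_j(t,\cdot)$ is supported in $B_j$ for $t<r_{B_j}$, vanishes for $t\ge r_{B_j}$, and $\big(\int_0^{r_{B_j}}\|A_j(t,\cdot)\|_{L^2}^2\,\tfrac{dt}{t}\big)^{1/2}\le V(B_j)^{1/2-1/p}$). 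Applying the reproducing formula term by term then produces $f=\sum_j\lambda_j\pi_\psi(A_j)$ with $\pi_\psi(G)(x):=c\int_0^\infty\psi(t^2L)(G(t,\cdot))(x)\,\tfrac{dt}{t}$.

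The heart of the matter, and the step I expect to be most delicate, is verifying that $c'\pi_\psi(A_j)$ is a $(p,L,M)$-atom associated with a fixed dilate of $B_j$. Writing $\psi(\lambda)=\lambda^M\widetilde\psi(\lambda)$ with $\widetilde\psi(\lambda)=\lambda\Psi(\sqrt\lambda)$ and $b_j:=c\int_0^{r_{B_j}}t^{2M}\widetilde\psi(t^2L)(A_j(t,\cdot))\,\tfrac{dt}{t}$, one has $\pi_\psi(A_j)=L^Mb_j$ (property~(1)). Because $\widehat\Psi$ is supported in $[-1,1]$, finite propagation speed forces the kernel of $L^k\widetilde\psi(t^2L)=t^{-2k}(t^2L)^{k+1}\Psi(t\sqrt L)$ to be supported in $\{|x-y|\le t\}$ for every $k$ (the differential operator $(t^2L)^{k+1}$ does not spread supports); since $A_j(t,\cdot)$ lives in $B_j$ only for $t<r_{B_j}$, this gives $\supp L^kb_j\subset 2B_j$ for $k=0,\dots,M$ (property~(2), after renaming $2B_j$ the atom's ball). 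For the size bound~(3) with $0\le k<M$ I would use the spectral-calculus estimate $\|L^k\widetilde\psi(t^2L)\|_{L^2\to L^2}\le Ct^{-2k}$ (since $\mu\mapsto\mu^{2k+2}\Psi(\mu)$ is bounded) and the Cauchy--Schwarz inequality in $t$ with the measure $\tfrac{dt}{t}$, the resulting $t$-integral $\int_0^{r_{B_j}}t^{2(2M-2k)}\tfrac{dt}{t}$ converging precisely because $M>k$; the endpoint $k=M$ I would instead treat by duality, $\|L^Mb_j\|_{L^2}=\|\pi_\psi(A_j)\|_{L^2}\le C\|A_j\|_{T^2_2}\le CV(B_j)^{1/2-1/p}$, using the square-function estimate for $\psi(t^2L)$. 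Thus $f=\sum_j(c')^{-1}\lambda_j\,(c'\pi_\psi(A_j))$ is an atomic $(p,L,M)$-representation with $\sum_j|(c')^{-1}\lambda_j|^p\le C\|f\|_{H^p_L}^p$, which is also the asserted quantitative decomposition.

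The genuine obstacle is not these computations but the low-exponent technicalities underlying them: the equivalence $\|f\|_{H^p_L}\sim\|S_Lf\|_{L^p}$ and the boundedness $\pi_\psi\colon T^p_2\to H^p_{L,at,M}$, for which the precise hypothesis $M>n(2-p)/(4p)=\tfrac{n}{2}\big(\tfrac{1}{p}-\tfrac{1}{2}\big)$ is exactly what is required so that the objects produced carry enough decay to be summed in $\ell^p$. These are carried out in full in \cite{Hofmann2011, SY}, so in the write-up I would isolate the self-contained parts above (the easy embedding and the identification of $\pi_\psi(A_j)$ as a $(p,L,M)$-atom) and cite \cite{Hofmann2011, SY} for the remaining ingredients.
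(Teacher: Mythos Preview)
Your proposal is correct and is, in fact, a faithful sketch of the very arguments carried out in \cite{Hofmann2011, SY, JY}; the paper itself does not give a proof at all but simply cites those references (Theorem~8.2 of \cite{Hofmann2011} for $p=1$, Theorem~1.3 of \cite{SY} for general $0<p\le1$, and Corollary~4.1 of \cite{JY} for the quantitative decomposition). So you have gone well beyond what the paper does: where the authors content themselves with a pointer to the literature, you have reproduced the skeleton of that literature---the uniform $H^p_L$ bound on atoms via Gaussian kernel decay, the area-function characterization, the tent-space atomic decomposition, and the lift via a Calder\'on reproducing formula built from a function with compactly supported Fourier transform so that finite propagation speed localizes the resulting atoms. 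Your identification of the structural inputs (self-adjointness, Gaussian upper bounds from Feynman--Kac, unit propagation for $\cos(t\sqrt{L})$ because $|x|^2\ge0$) and of the role of the threshold $M>n(2-p)/(4p)$ is accurate. For the purposes of this paper a one-line citation suffices, but nothing in your outline is wrong.
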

\begin{proof}
 For the proof of the first conclusion,  we refer the reader to  \cite[Theorem 8.2]{Hofmann2011} for $p=1$ and \cite[Theorem 1.3]{SY} for general $0<p\leq 1$.
 For the proof of the second conclusion,  we refer the reader to~\cite[Corollary 4.1]{JY}.
 \end{proof}

Let $\phi=L^M\nu$ be a function in $L^2(\RN)$, where $\nu\in\mathcal{D}(L^M)$. Denote $U_0(B)=B(0,1)$ and $U_j(B)=B(0,2^j)\backslash B(0,2^{j-1})$ for $j=1,2,\cdots$. Following \cite{Hofmann2011, HM,JY}, for $\epsilon>0$ and $M\in\mathbb{N}$, we introduce the space
$$\mathcal{M}^{M,\epsilon}(L):=\big\{\phi=L^M\nu\in L^2(\RN):
\ \|\phi\|_{\mathcal{M}^{M,\epsilon}(L)}<\infty\big\},$$
where
$$\|\phi\|_{\mathcal{M}^{M,\epsilon}(L)}:= \sup_{j\in\mathbb{N}}
\left\{2^{j\epsilon}2^{nj(1/2+1/p-1)}\sum_{k=0}^M\|L^k\nu\|_{L^2(U_j(B))}\right\}.$$
Then for any $M\in \mathbb{N}$, define
$$\mathcal{M}^{M}(L):= \bigcap_{\epsilon>0}
(\mathcal{M}^{M,\epsilon}(L))^\ast.$$
 A functional  $f\in\mathcal{M}^{M}(L)$ is said to be in ${\rm Lip}_{p,L}^M(\RN)$ if
\begin{equation*}
\|f\|_{{\rm Lip}_{p,L}^M}:=\sup_{B\subset\RN}V(B)^{1-1/p}\left[\frac{1}{V(B)}\int_B
|(I-e^{-r_B^2L})^Mf(x)|^2 \,dx\right]^{1/2}< \infty,
\end{equation*}
where the supremum is taken over all ball $B$ of $\RN$. Then by~\cite[Theorem 4.1]{JY}, we have the following dual result.
\begin{prop}\label{dual}
For $0<p\leq 1,$  we have
$$
\left(H^p_{L}(\mathbb{R}^n)\right)^*= {\rm Lip}_{p,L}^M(\mathbb{R}^n).$$

\end{prop}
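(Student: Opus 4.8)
The plan is to prove the two inclusions ${\rm Lip}_{p,L}^M(\RN)\subseteq(H^p_L(\RN))^\ast$ and $(H^p_L(\RN))^\ast\subseteq{\rm Lip}_{p,L}^M(\RN)$ separately, in each case by pairing a Lipschitz functional with the atomic (and molecular) building blocks of $H^p_L$. The only analytic input beyond the definitions recalled above is that the Hermite semigroup $e^{-tL}$ has a kernel obeying Gaussian --- hence Davies--Gaffney --- upper bounds, which places us in the general setting of \cite{HM,JY}; the argument below follows their scheme, and I would cite their molecular theory for $H^p_L$ rather than reprove it.

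\textbf{The inclusion ${\rm Lip}_{p,L}^M\subseteq(H^p_L)^\ast$.} Let $f\in{\rm Lip}_{p,L}^M$, so in particular $f\in\mathcal M^M(L)$ and it pairs with every element of $\bigcup_{\epsilon>0}\mathcal M^{M,\epsilon}(L)$, which contains every $(p,L,M)$-atom. Fix such an atom $a=L^Mb$ adapted to $B=B(x_B,r_B)$; the point is to transfer the $M$ powers of $L$ off $b$ and onto $f$ in the form $(I-e^{-r_B^2L})^M$. Setting $\varphi(s):=\big(s/(1-e^{-s})\big)^M$, one has $\varphi(s)=s^M+\psi(s)$ with $\psi$ bounded on $[0,\infty)$ and decaying exponentially as $s\to+\infty$; consequently $c:=r_B^{-2M}\varphi(r_B^2L)b=a+r_B^{-2M}\psi(r_B^2L)b$ lies in $L^2(\RN)$, satisfies $(I-e^{-r_B^2L})^Mc=a$, and --- by condition (3) for $a$ and the off-diagonal decay of $\psi(r_B^2L)$ --- obeys $\|c\|_{L^2(2^jB\setminus 2^{j-1}B)}\lesssim 2^{-j\delta}V(B)^{1/2-1/p}$ for every $\delta>0$, where $2^jB:=B(x_B,2^jr_B)$. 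Hence $\langle f,a\rangle=\langle(I-e^{-r_B^2L})^Mf,c\rangle$, and splitting $c$ over the dyadic annuli around $B$, applying Cauchy--Schwarz, and using the defining bound for $f$ on $B$ together with the standard (semigroup) growth estimates for $(I-e^{-r_B^2L})^Mf$ on the dilates $2^jB$, one gets $|\langle f,a\rangle|\lesssim\|f\|_{{\rm Lip}_{p,L}^M}$ uniformly in $a$. Finally, for $g=\sum_j\lambda_ja_j\in H^p_L$ as in Proposition~\ref{th3}, the bound $\sum_j|\lambda_j|\le\big(\sum_j|\lambda_j|^p\big)^{1/p}$ (valid for $0<p\le1$) yields $|\langle f,g\rangle|\le\sum_j|\lambda_j|\,|\langle f,a_j\rangle|\lesssim\|f\|_{{\rm Lip}_{p,L}^M}\|g\|_{H^p_L}$, so $f\in(H^p_L)^\ast$ with norm $\lesssim\|f\|_{{\rm Lip}_{p,L}^M}$.

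\textbf{The inclusion $(H^p_L)^\ast\subseteq{\rm Lip}_{p,L}^M$.} Let $\ell\in(H^p_L)^\ast$. From the identity $(I-e^{-r_B^2L})^M=L^M\!\int_{[0,r_B^2]^M}e^{-(s_1+\cdots+s_M)L}\,ds_1\cdots ds_M$ and the Davies--Gaffney bounds one checks that, for $g\in L^2(\RN)$ supported in $B$, the function $(I-e^{-r_B^2L})^Mg$ is $V(B)^{1/p-1/2}\|g\|_{L^2}$ times a $(p,L,M)$-molecule adapted to $B$; thus $\|(I-e^{-r_B^2L})^Mg\|_{H^p_L}\lesssim V(B)^{1/p-1/2}\|g\|_{L^2}$, and more generally $\bigcup_{\epsilon>0}\mathcal M^{M,\epsilon}(L)$ embeds continuously into $H^p_L\cap L^2$. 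Restricting $\ell$ to these test functions therefore defines an element $f\in\bigcap_{\epsilon>0}\big(\mathcal M^{M,\epsilon}(L)\big)^\ast=\mathcal M^M(L)$ with $\langle f,\phi\rangle=\ell(\phi)$ there. For each ball $B$, duality on $L^2(B)$ and self-adjointness of $(I-e^{-r_B^2L})^M$ give
\begin{align*}
V(B)^{1/2-1/p}\big\|(I-e^{-r_B^2L})^Mf\big\|_{L^2(B)}
&=V(B)^{1/2-1/p}\sup_{\substack{g\in L^2(B)\\ \|g\|_{L^2}\le1}}\big|\ell\big((I-e^{-r_B^2L})^Mg\big)\big|\\
&\lesssim\|\ell\|\,V(B)^{1/2-1/p}\sup_{\|g\|_{L^2}\le1}\big\|(I-e^{-r_B^2L})^Mg\big\|_{H^p_L}\lesssim\|\ell\|,
\end{align*}
and taking the supremum over all balls shows $f\in{\rm Lip}_{p,L}^M$ with $\|f\|_{{\rm Lip}_{p,L}^M}\lesssim\|\ell\|$. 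Since $\ell$ and $\langle f,\cdot\rangle$ agree on atoms, they agree on all of $H^p_L$ by density (Proposition~\ref{th3}). Combining the two inclusions gives $(H^p_L(\RN))^\ast={\rm Lip}_{p,L}^M(\RN)$ with equivalent norms.

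\textbf{Main obstacle.} The genuinely routine points are the $\ell^p\hookrightarrow\ell^1$ summation and the $L^2(B)$-duality. The real work sits in the two ``change of building block'' identities --- realizing an atom $a=L^Mb$ as $(I-e^{-r_B^2L})^Mc$ with quantitative off-diagonal control on $c$ (the $\varphi=s^M+\psi$ splitting above), and dually recognizing $(I-e^{-r_B^2L})^Mg$, for $g$ supported in a ball, as a molecule --- together with the auxiliary tail estimate for $(I-e^{-r_B^2L})^Mf$ across dyadic scales. All three rest on off-diagonal (Davies--Gaffney) bounds for functions of $L$ and on the molecular decomposition of $H^p_L$; since the Hermite heat kernel is Gaussian, these are available from \cite{HM,JY}. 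The one structural subtlety is to set up $f$ as a functional on the distribution class $\mathcal M^M(L)$ in a way independent of the ball $B$, which is precisely what the uniform embedding $\mathcal M^{M,\epsilon}(L)\hookrightarrow H^p_L$ provides.
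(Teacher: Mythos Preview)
The paper does not actually prove this proposition at all: immediately before the statement it writes ``Then by~\cite[Theorem 4.1]{JY}, we have the following dual result,'' and no further argument is given. Your proposal, by contrast, supplies a full outline of the duality proof --- the two inclusions via atoms/molecules, the $(I-e^{-r_B^2L})^M$ trick, and the $L^2(B)$-duality step --- which is exactly the scheme carried out in \cite{JY} (building on \cite{HM}) that the paper is citing. So your approach is not different from the paper's; it is simply an honest sketch of the cited reference, whereas the paper delegates the entire proof to that reference. If you are writing this up, the cleanest option is to do as the paper does and cite \cite[Theorem~4.1]{JY} directly, perhaps adding one sentence noting that the Hermite heat kernel has Gaussian (hence Davies--Gaffney) bounds so that the hypotheses of \cite{JY} are met.
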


Now we are ready to prove   Theorem~\ref{th1.1}.

\begin{proof}[ \bf{Proof of  Theorem~\ref{th1.1}}]
 Let  $n\geq 1$ and $0<p\leq 1$.
Set
$$
\sigma(n,p)=\frac{3n}{4}\big(2-p\big).
$$
First, we claim that that there exists a constant $C>0$ such that  for any $(p, L, M)$-atom $a$, supported in $B(x_B,r)$,
\begin{eqnarray}\label{foratom}
 \sum\limits_{\mu\in \mathbb{N}^n} {|\langle a, \Phi_{\mu} \rangle |^p \over (2|\mu|+n)^{ \sigma(n,p)}} \leq C <\infty.
\end{eqnarray}

We decompose the summation over $\mu$ by
\begin{eqnarray*}
  \sum\limits_{\mu\in \mathbb{N}^n} {|\langle a, \Phi_{\mu} \rangle |^p \over (2|\mu|+n)^{ \sigma(n,p)}}
&=&\sum_{j\geq -1} \ \sum_{2^{j}<2|\mu|+n\leq 2^{j+1}} {|\langle a, \Phi_{\mu} \rangle |^p \over (2|\mu|+n)^{ \sigma(n,p)}}\\
&=& \sum_{j:  \, 2^jr^2\geq 1} \ \sum_{2^{j}<2|\mu|+n\leq 2^{j+1}}{|\langle a, \Phi_{\mu} \rangle |^p \over (2|\mu|+n)^{ \sigma(n,p)}}
\\
&\ \ \ \ \  +&   \sum_{j: \ 2^jr^2< 1} \ \sum_{2^{j}<2|\mu|+n\leq 2^{j+1}}{|\langle a, \Phi_{\mu} \rangle |^p \over (2|\mu|+n)^{ \sigma(n,p)}}
\\
&=&{\rm I}+{\rm II}.
\end{eqnarray*}

For the term ${\rm I}$, we apply H\"older's inequality, Plancherel type equality~\eqref{e1.3} and estiamte~(3) in the definition of $(p, L, M)$-atom to get
\begin{eqnarray*}
{\rm I}&=& \sum_{j:\ 2^jr^2\geq 1}\sum_{2^{j}<2|\mu|+n\leq 2^{j+1}}{|\langle a, \Phi_{\mu} \rangle |^p \over (2|\mu|+n)^{ \sigma(n,p)}}\\
&\leq& \sum_{j:\ 2^jr^2\geq 1} \left(\sum_{2^{j}<2|\mu|+n\leq 2^{j+1}} |\langle a, \Phi_{\mu} \rangle|^2\right)^{\frac{p}{2}}
   \left(\sum_{2^{j}<2|\mu|+n\leq 2^{j+1}}  (2|\mu|+n)^{-\frac{2\sigma(n,p)}{2-p}}\right)^{\frac{2-p}{2}}\\
&\leq& C \sum_{j:\ 2^jr^2\geq 1}\|a\|_2^p     2^{-j(\sigma(n,p)-\frac{n(2-p)}{2})}\\
&=&C r^{\frac{n(p-2)}{2}}\sum_{j:\ 2^jr^2\geq 1}     2^{-\frac{jn(2-p)}{4}}\\
&\leq& C.
\end{eqnarray*}

For the term ${\rm II}$,  we notice that   $ a=L^M b$ and  obtain
$$\langle a, \Phi_{\mu} \rangle=\langle L^M b, \Phi_{\mu} \rangle =\langle b, L^M \Phi_{\mu} \rangle
 =(2|\mu|+n)^{M}\langle b, \Phi_{\mu} \rangle.
 $$
By H\"older's inequality, Plancherel type equality~\eqref{e1.3} and estiamte~(3) in the definition of $(p, L, M)$-atom,
\begin{eqnarray*}
{\rm II}&=&\sum_{ j:\   2^jr^2< 1  } \ \sum_{2^{j}<2|\mu|+n\leq 2^{j+1}}{|\langle a, \Phi_{\mu} \rangle |^p \over (2|\mu|+n)^{ \sigma(n,p)}}\\
&=&\sum_{j:\  2^jr^2< 1} \ \sum_{2^{j}<2|\mu|+n\leq 2^{j+1}} { |\langle b, \Phi_{\mu} \rangle|^p \over (2|\mu|+n)^{-Mp+\sigma(n,p)}}\\
&\leq& \sum_{j:\  2^jr^2< 1} \left(\sum_{2^{j}<2|\mu|+n\leq 2^{j+1}} |\langle b, \Phi_{\mu} \rangle|^2\right)^{\frac{p}{2}}
  \left(\sum_{2^{j}<2|\mu|+n\leq 2^{j+1}}   (2|\mu|+n)^{\frac{2(Mp-\sigma(n,p))}{2-p}}\right)^{\frac{2-p}{2}}\\
&\leq& C \|b\|_2^p \sum_{j:\  2^jr^2< 1}    2^{j(Mp-\sigma(n,p)+\frac{n(2-p)}{2})}\\
&\leq& C r^{2Mp+\frac{n(p-2)}{2}}\sum_{j:\  2^jr^2< 1}     2^{j(Mp-\frac{n(2-p)}{4})}\\
&\leq& C.
\end{eqnarray*}

Thus we complete the proof of estimate~\eqref{foratom}.
Now for $f\in H^p_{L}(\mathbb{R}^n)$, it follows from~Proposition~\ref{th3} that there exist $(p, L, M)$-atoms $\{a_j\}_{j=0}^{\infty}$ and $\{\lambda_j\}_{j=0}^{\infty}\in\ell^p,$ such that $f=\sum_{j=0}^{\infty}\lambda_j a_j $ in $H^p_{L}$ and
$$
\sum_{j=0}^{\infty}|\lambda_j|^p\leq C\|f\|_{H^p_{L}}^p.
$$
A direct calculation shows that $\Phi_{\mu}\in {\rm Lip}_{p,L}^M(\mathbb{R}^n)$. Thus it follows from Proposition~\ref{dual} that
\begin{align*}
|\langle f, \Phi_{\mu} \rangle|&=|\langle\sum_{j=0}^{\infty}\lambda_j a_j, \Phi_{\mu} \rangle|\\
&\leq \overline {\lim_{N\to \infty}}|\langle\sum_{j=0}^{N}\lambda_j a_j, \Phi_{\mu} \rangle|+\overline {\lim_{N\to \infty}}|\langle\sum_{j=N+1}^{\infty}\lambda_j a_j, \Phi_{\mu} \rangle|\\
&\leq \sum_{j=0}^{\infty}|\lambda_j||\langle a_j, \Phi_{\mu} \rangle|+\overline {\lim_{N\to \infty}}C_\mu\|\sum_{j=N+1}^{\infty}\lambda_j a_j\|_{H_L^p}\\
&\leq \sum_{j=0}^{\infty}|\lambda_j||\langle a_j, \Phi_{\mu} \rangle|.
\end{align*}
This, together with  Minkowski's inequality and \eqref{foratom}, shows that  for $0<p\leq 1$,
\begin{eqnarray*}
\sum\limits_{\mu\in \mathbb{N}^n}{|\langle f, \Phi_{\mu} \rangle|^p \over (2|\mu|+n)^{ \sigma(n,p)}}
&\leq& \sum_{j=0}^{\infty} |\lambda_j|^p \sum\limits_{\mu\in \mathbb{N}^n} {| \langle a_j, \Phi_{\mu} \rangle|^p \over (2|\mu|+n)^{ \sigma(n,p)}}\\
&\leq& C \sum_{j=0}^{\infty} |\lambda_j|^p  \leq  C \|f\|_{H^p_{L}}^p.
\end{eqnarray*}
Consequently, we have obtained estimate \eqref{11c}.
By Proposition~\ref{prop2.2}, we see that
   $H^p({\mathbb R}^n)$ is a proper subspace of the space $H^p_{L}({\mathbb R}^n),$  and so  \eqref{11d}  follows readily.
The proof of Theorem~\ref{th1.1} is complete.
 \end{proof}

 \bigskip

 \noindent
{\bf Acknowledgments.} P. Chen is supported by NNSF of China (Grant No. 12171489). J. Xiao is supported by the Natural Science Foundation of Guangdong Province (Grant No. 2019A1515010955).  The authors would like to  thank Lixin Yan for helpful  discussions.

\end{document}